\theoremstyle{plain}
\newtheorem{theorem}[equation]{Theorem}
\newtheorem{proposition}[equation]{Proposition}
\newtheorem{conjecture}[equation]{Conjecture}
\theoremstyle{definition}
\newtheorem{define}[equation]{Definition}
\newcommand{\tr}{\mathrm{tr}}
\renewcommand\dim{{\rm dim\,}}
\def\d/{/\mspace{-6.0mu}/}
\newcommand{\p}{\partial}
\begin{document}

\title{Nonlinear harmonic forms and Indefinite Bochner Formulas}
\author{Mark Stern}
\footnotetext{Duke University, Department of Mathematics;  
e-mail:  stern@math.duke.edu,\\ partially supported by by NSF grant DMS 1005761}
\date{}

 \maketitle
 
\section{Introduction}
The link between topology and geometry provided by harmonic forms and $L_2$-cohomology has played a fundamentally important role in differential and algebraic geometry. In this note, we introduce a new object to geometric analysis, which we call a {\em nonlinear harmonic form}.  We define these forms as closed differential forms $z$ which 
\begin{itemize}\item represent a fixed class in de Rham cohomology, and
\item minimize $\|z\|^2_{L_2}$ subject to a natural nonlinear constraint $p(z) = 0.$
\end{itemize}
Because the energy is the usual $L_2$ energy, the nonlinear harmonic forms satisfy rich Euler Lagrange equations. 
The constraints we consider in this paper are polynomial. Our initial regularity results require that $p$ is diffeomorphism invariant.  Our first example, which arises from our investigations into the Hopf conjecture, imposes the constraint $p(z) = z\wedge z$. These forms are in some ways intermediate between surfaces and harmonic forms, and their analysis is similar to that of harmonic maps. It is not difficult to prove minimizers exist, but their regularity theory remains to be developed. For example, we have the following theorem. 
\begin{theorem}\label{exists}Let $M^n$ be a compact Riemannian manifold. Let $f $ be a $p-$form with 
$f\wedge f = 0.$ Let $h $ be the harmonic representative of $f$. 
Set $$Q:=\{y \in H_1 :(h+dy)\wedge (h+dy)  = 0, \,\, \text{ and }d^*y =0\}.$$ 
Let $E(y) = \|h+dy\|^2 $. Let $\nu_0:= \inf\{E(y):y\in Q\}.$
Then there exists $y\in H_1$ such that $E(y) = \nu_0.$ Moreover $z:=h+dy$ lies in the Morrey space $L^{2,n-2p}$. 
\end{theorem}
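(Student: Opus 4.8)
The plan is to establish existence by the direct method of the calculus of variations and then to extract the Morrey bound from a monotonicity formula supplied by the diffeomorphism invariance of the constraint.

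\emph{Existence.} First I would take a minimizing sequence $y_k \in Q$, so that $d^*y_k = 0$ and $E(y_k) = \|h + dy_k\|^2 \to \nu_0$. Since $\|h\|$ is fixed, $\|dy_k\|_{L_2}$ is bounded. Replacing $y_k$ by its component orthogonal to the harmonic forms $\sH$ alters neither $dy_k$ nor $d^*y_k$, so I may assume $y_k \perp \sH$; on $\sH^\perp$ the bound $\|y_k\|_{L_2} \le C\|(d+d^*)y_k\| = C\|dy_k\|$, combined with the G{\aa}rding inequality for $d + d^*$, controls $\|y_k\|_{H_1}$. Passing to a subsequence, $y_k \rightharpoonup y$ weakly in $H_1$, whence $dy_k \rightharpoonup dy$ weakly in $L_2$ and $z_k := h + dy_k \rightharpoonup z := h + dy$; the conditions $d^*y = 0$ and $y \perp \sH$ survive in the limit, and by Rellich's theorem $y_k \to y$ strongly in $L_2$.

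\emph{The main obstacle: passing the constraint to the limit.} The quadratic constraint $z_k \wedge z_k = 0$ is not weakly continuous, and this is the crux of the argument. Writing $w_k := d(y_k - y) \rightharpoonup 0$ in $L_2$ and expanding,
\[ z_k \wedge z_k = z \wedge z + 2\, z \wedge w_k + w_k \wedge w_k = 0 . \]
Pairing $z \wedge w_k$ with a smooth test form and using $w_k \rightharpoonup 0$ shows $z \wedge w_k \rightharpoonup 0$ as distributions. For the genuinely quadratic term I would exploit that $w_k$ is \emph{exact}: since $w_k = d(y_k - y)$ and $dw_k = 0$,
\[ w_k \wedge w_k = d\big((y_k - y) \wedge w_k\big) , \]
and because $y_k - y \to 0$ strongly in $L_2$ while $\|w_k\|_{L_2}$ stays bounded, $(y_k - y)\wedge w_k \to 0$ in $L_1$; applying $d$ gives $w_k \wedge w_k \to 0$ as distributions. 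This compensated-compactness mechanism forces $z \wedge z = 0$ almost everywhere, so $z \in Q$. (For $p$ odd the constraint is vacuous and $z = h$.) Weak lower semicontinuity of the $L_2$ norm then yields $E(y) \le \liminf E(y_k) = \nu_0 \le E(y)$, so $z = h + dy$ is a minimizer.

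\emph{Morrey regularity.} For $z \in L^{2,n-2p}$ I would use that $z \wedge z = 0$ is preserved under pullback. For a compactly supported flow $\phi_t$ generated by $X$, the form $\phi_t^* z$ is closed, lies in the class $[h]$ (as $\phi_t$ is isotopic to the identity), and satisfies $(\phi_t^* z)\wedge(\phi_t^* z) = \phi_t^*(z \wedge z) = 0$; after gauge fixing it represents an element of $Q$, so $E(\phi_t^* z) \ge \nu_0 = E(z)$ with equality at $t = 0$. Hence $z$ is stationary under inner variations, $\frac{d}{dt}\big|_{t=0}\int_M |\phi_t^* z|^2\,dV = 0$, which is precisely the distributional conservation law $\operatorname{div} T = 0$ for the stress--energy tensor $T_{ij} = \langle \iota_{e_i} z, \iota_{e_j} z\rangle - \tfrac12|z|^2\delta_{ij}$. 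Testing this against a radial cutoff field centred at $x_0$, and using $\tr T = (p - \tfrac{n}{2})|z|^2$ together with $|\iota_\nu z|^2 \ge 0$, produces the almost-monotonicity of $r^{-(n-2p)}\int_{B_r(x_0)}|z|^2\,dV$, the curvature of $M$ entering only through a bounded exponential factor. Comparing at scale $r_0$ of order the injectivity radius and invoking the global bound $\int_M |z|^2 = \nu_0$ gives $\int_{B_r(x_0)} |z|^2 \le C\, r^{n-2p}$ uniformly, i.e. $z \in L^{2,n-2p}$. I expect the constraint-passage step to be the main difficulty conceptually; the monotonicity computation is the most delicate technically, since $z$ is only $L_2$, but the inner-variation formulation circumvents any need for extra regularity because $\operatorname{div} T = 0$ holds distributionally for $T \in L_1$.
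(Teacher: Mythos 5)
Your proposal is correct and takes essentially the same route as the paper: existence by the direct method, with the quadratic constraint passed to the weak limit via the identical compensated-compactness mechanism (writing the error as $d\bigl((y_k-y)\wedge d(y_k-y)\bigr)$ and using strong $L_2$ convergence of $y_k-y$ against boundedness of $d(y_k-y)$), and the Morrey bound via Price monotonicity derived from stationarity of $\|\phi_t^*z\|^2$ under inner variations, which is precisely the paper's first-variation identity (\ref{price}) with the same curvature-correcting exponential factor. The only differences are presentational: you phrase the first variation as distributional conservation of the stress--energy tensor and make explicit the harmonic-projection/G{\aa}rding bound and the gauge-fixing of $\phi_t^*z$ that the paper leaves implicit.
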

  In this note, we establish only the most elementary properties of nonlinear harmonic forms. A careful analysis of their singularities requires further study. 

Our interest in these nonlinear harmonic forms was first sparked by our application of a new indefinite Bochner formula to  investigation of the Hopf conjecture. Although we have not fruitfully applied nonlinear harmonics to this conjecture, we include a discussion of the Hopf conjecture, which may be of independent interest.

\begin{conjecture}\label{conjh}(Hopf) 
$S^2\times S^2$ does not admit a metric of strictly positive sectional curvature.  
\end{conjecture}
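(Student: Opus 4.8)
Since Conjecture \ref{conjh} is a long-standing open problem, what follows is a strategy rather than a completed proof; it is the strategy toward which the constructions of this paper are aimed. The plan is to argue by contradiction: suppose $S^2\times S^2$ carries a metric $g$ of strictly positive sectional curvature. The decisive topology is that $H^2(S^2\times S^2;\IR)$ is two-dimensional, generated by the pullbacks of the area classes $[\omega_1],[\omega_2]$ of the two factors, and that the intersection form is hyperbolic, so that each generator satisfies $[\omega_i]^2=0$. This vanishing self-intersection matters because $\int_M\omega\wedge\omega$ is a cohomological invariant and is precisely the obstruction to representing a class by a pointwise \emph{decomposable} form (one with $\omega\wedge\omega\equiv 0$). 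Thus on $S^2\times S^2$, unlike on $\IC P^2$ (whose generator has self-intersection $+1$), the constraint $z\wedge z=0$ is cohomologically admissible, and Theorem \ref{exists} produces a nonlinear harmonic representative $z=h+dy$ of $[\omega_1]$ with $z\wedge z\equiv 0$.

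The purpose of enforcing decomposability is a pointwise Weitzenb\"ock identity. Writing the curvature term of the Hodge Laplacian on $2$-forms as $q(R)$, so that $\Delta=\nabla^*\nabla+q(R)$, a direct computation shows that for a decomposable form $z=|z|\,e^1\wedge e^2$, with $e^1,e^2,e^3,e^4$ an oriented orthonormal coframe,
$$\langle q(R)z,z\rangle=\bigl(K_{13}+K_{14}+K_{23}+K_{24}\bigr)\,|z|^2,$$
the sum of the four \emph{mixed} sectional curvatures of planes meeting both $\mathrm{span}(e_1,e_2)$ and its orthogonal complement. On the Riemannian product this quantity vanishes identically, since the mixed planes are flat; this is exactly why the parallel form $\omega_1$ survives and why no contradiction arises for the product. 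Under strictly positive sectional curvature, however, it is strictly positive wherever $z\neq 0$. This is the sense in which the ordinarily indefinite $2$-form Bochner term — governed by the full curvature operator on $\Lambda^2$, whose positivity is strictly stronger than positivity of sectional curvature — becomes definite once restricted to the decomposable locus.

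The next step is to couple this identity to the Euler--Lagrange system of the minimizer. Since $z$ is closed and minimizes $\|z\|_{L_2}^2$ subject to $z\wedge z=0$, it satisfies $dz=0$ and $d^*z=T$, where $T$ collects the Lagrange-multiplier terms enforcing the constraint. Pairing the Weitzenb\"ock formula with $z$ and integrating gives
$$\int_M|\nabla z|^2+\int_M\bigl(K_{13}+K_{14}+K_{23}+K_{24}\bigr)|z|^2=\int_M\langle dT,z\rangle.$$
Were the right-hand side nonpositive, the strict positivity of the curvature integrand would force $z\equiv 0$, contradicting $[z]=[\omega_1]\neq 0$ and finishing the argument.

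The hard part — and the reason this program is not carried through here — is precisely the control of the multiplier term $\int_M\langle dT,z\rangle$ together with the regularity of $z$. Theorem \ref{exists} places $z$ only in the Morrey space $L^{2,n-2p}$, which in the relevant case $n=4,\,p=2$ degenerates to $L^2$ itself; so before any integration by parts one must rule out, or carefully account for, a singular set on which $z$ vanishes and the decomposable frame $e^1,e^2$ is undefined, and the borderline Morrey exponent gives no room to do so cheaply. Worse, the multiplier $T$ is itself only weakly controlled and its contribution $\int_M\langle dT,z\rangle$ carries no evident sign: the very constraint that renders the curvature term definite simultaneously injects these uncontrolled terms into the identity. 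Bridging this gap — between the favorable sign of the mixed-curvature term on the decomposable locus and the unfavorable analytic cost of forcing decomposability — is the central obstacle, and is what the authors have in mind in stating that they have not yet fruitfully applied nonlinear harmonic forms to the conjecture.
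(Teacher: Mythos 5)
You are right that this statement is the open Hopf conjecture: the paper contains no proof of it, only a program, so presenting a strategy rather than a proof is the correct move, and your outline tracks the paper's program faithfully in several respects. The application of Theorem \ref{exists} to the pullback of an area form (whose square vanishes pointwise, not merely cohomologically) is exactly the intended use, your Weitzenb\"ock computation $\langle q(R)z,z\rangle=(K_{13}+K_{14}+K_{23}+K_{24})|z|^2$ for decomposable $z$ agrees with the curvature terms $R_{1331}+R_{2332}+R_{1441}+R_{2442}$ appearing in (\ref{exboch}) and (\ref{nboch}), and your observation that the Morrey exponent degenerates to $L^{2,0}=L_2$ when $n=4$, $p=2$ is a fair assessment of how little regularity is currently available.

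There are, however, two concrete defects in the strategy as you framed it. First, your proposed contradiction mechanism is vacuous: with $dz=0$ and $d^*z=T$, integration by parts gives $\int_M\langle dT,z\rangle=\|d^*z\|_{L_2}^2\geq 0$ identically, so the right-hand side of your integrated identity can never be nonpositive unless $T=0$, i.e.\ unless $z$ is genuinely harmonic --- precisely the case the nonlinear minimizer was introduced to escape. The identity merely computes $\|d^*z\|^2$ and is consistent with any positively curved metric; no regularity improvement or control of the multiplier changes this, because the two sides are equal by definition. The paper's mechanism is pointwise rather than integrated: one sets $f=\ln a$ (or $\ln\sqrt{a^2-b^2}$), notes that the mean curvature terms drop out at a critical point of this generalized area function via (\ref{hflow}), and aims to contradict the maximum principle using (\ref{nboch}); there the constraint $z\wedge z=0$ does real work by forcing integrability of $\mathrm{span}\{e_3,e_4\}$ (equation (\ref{mu5})), killing the torsion vector $T_b$, so that the sole surviving obstruction is the single term $-\frac{1}{2}|d\mu|^2$, with the favorable $|II_a|^2+|II_b|^2$ terms retained. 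Second, you lean on strict positivity of the mixed curvature sum as the decisive sign, but the paper cites Bettiol \cite{Bettiol} precisely to foreclose this: $S^2\times S^2$ admits metrics of positive biorthogonal curvature, i.e.\ metrics for which exactly the curvature combination in your identity is positive. Hence no argument using only the positivity of $K_{13}+K_{14}+K_{23}+K_{24}$ can settle Conjecture \ref{conj} or \ref{conjh}; any completion of the program must exploit additional structure --- the second fundamental form terms, or the symplectic/Hamiltonian structure of $\mu$ on the leaves --- beyond the sign of the biorthogonal curvatures.
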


In order to apply harmonic form techniques to this question, we consider a stronger conjecture. 
\begin{conjecture}\label{conj} A compact oriented $4-$manifold with positive sectional curvature with nonvanishing second betti number has definite intersection form. 
\end{conjecture}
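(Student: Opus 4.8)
The plan is to recast definiteness of the intersection form as the impossibility of a positively curved $M^4$ carrying simultaneously a nonzero self-dual and a nonzero anti-self-dual harmonic $2$-form. Writing the Hodge decomposition $H^2 = \mathcal{H}^+ \oplus \mathcal{H}^-$ into self-dual and anti-self-dual harmonic forms, the intersection form is positive definite on $\mathcal{H}^+$ (since $\int \omega \wedge \omega = \int |\omega|^2$ there) and negative definite on $\mathcal{H}^-$, and the two summands are orthogonal for the form; hence it is definite if and only if $b_2^+ = 0$ or $b_2^- = 0$. Assuming the form indefinite, I would pick a nonzero class $c \in H^2$ with $c \cup c = 0$, whose harmonic representative $h$ then satisfies $\|h^+\|_{L_2} = \|h^-\|_{L_2} \neq 0$. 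The first step is to upgrade this integrated balance to the pointwise balance $|z^+| = |z^-|$, equivalently $z \wedge z = 0$, by replacing $h$ with the nonlinear harmonic representative $z$ of $c$ produced by Theorem~\ref{exists}.

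The analytic core is an indefinite Bochner formula gotten by adding the Weitzenb\"ock identities for the two chiralities. For a $\Delta$-harmonic $2$-form the curvature endomorphism restricts on $\Lambda^{\pm}$ to $\tfrac{s}{2} - 2\mathcal{R}$, so that $\int(\tfrac{s}{2}|z^\pm|^2 - 2\langle \mathcal{R} z^\pm, z^\pm\rangle) = -\|\nabla z^\pm\|^2$. Summing and using $|z^+| = |z^-| =: \varphi$ produces the curvature integrand $\varphi^2\bigl(s - 2\langle \mathcal{R}\hat\alpha,\hat\alpha\rangle - 2\langle \mathcal{R}\hat\beta,\hat\beta\rangle\bigr)$, where $\hat\alpha \in \Lambda^+$ and $\hat\beta \in \Lambda^-$ are the unit directions of $z$. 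Each single-chirality term can vanish on its own (the self-dual term does so on the K\"ahler form of $\mathbb{CP}^2$), which is exactly why no single-chirality Bochner vanishing can hold; the \emph{indefinite} combination is essential.

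The key pointwise lemma exploits the balance condition: if $\alpha \in \Lambda^+$ and $\beta \in \Lambda^-$ have equal norm, then $\alpha \pm \beta$ are decomposable, so $\langle \mathcal{R}(\alpha \pm \beta), \alpha \pm \beta\rangle$ equals a sectional curvature times an area. Polarizing, the maximal chirality eigenvalues satisfy $\lambda_{\max}(\mathcal{R}|_{\Lambda^+}) + \lambda_{\max}(\mathcal{R}|_{\Lambda^-}) = K(P) + K(P^\perp)$ for a pair of orthogonal $2$-planes, while $\tfrac{s}{2}$ is the sum of the six sectional curvatures in an adapted orthonormal frame. Hence, for every unit $\hat\alpha, \hat\beta$, the quantity $s - 2\langle \mathcal{R}\hat\alpha,\hat\alpha\rangle - 2\langle \mathcal{R}\hat\beta,\hat\beta\rangle$ is bounded below by twice the sum of the four ``mixed'' sectional curvatures, which under strictly positive sectional curvature is bounded away from $0$ by a constant $8\delta$ from compactness. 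Feeding this into the indefinite Bochner identity gives $\int 8\delta\,\varphi^2 \le -\|\nabla z^+\|^2 - \|\nabla z^-\|^2 \le 0$, forcing $z \equiv 0$ and contradicting $[z] = c \neq 0$.

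The main obstacle is that the two ingredients pull against each other: the Bochner identity requires $z$ to be $\Delta$-harmonic, whereas the pointwise balance $z \wedge z = 0$ is available only for the nonlinear harmonic representative, which solves the constrained Euler--Lagrange equation rather than $\Delta z = 0$. Integrating by parts on the nonlinear harmonic form therefore introduces Lagrange-multiplier correction terms, and one must show that these do not overwhelm the positive curvature term; moreover the merely Morrey-space regularity $L^{2,n-2p}$ guaranteed by Theorem~\ref{exists} must be sharpened to justify each integration by parts and to guarantee that the minimizer retains genuinely nonvanishing self-dual and anti-self-dual parts. Controlling these error terms is precisely the gap that keeps the conjecture open, and is where the regularity theory of nonlinear harmonic forms flagged in the introduction would have to be developed.
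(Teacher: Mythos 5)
You should be clear at the outset that the statement you are addressing is Conjecture \ref{conj}: the paper does not prove it and explicitly leaves it open, offering only a heuristic program toward it. Your proposal essentially reconstructs that program. Your ``nonlinear harmonic representative'' with $z\wedge z=0$ is exactly the object of Theorem \ref{exists}, and your summed-chirality Weitzenb\"ock identity carries exactly the curvature content of the paper's indefinite Bochner formulas (\ref{exoboch}), (\ref{exboch}), (\ref{nboch}): in the adapted frame where $z=a\,w^1\wedge w^2$, the quantity $s-2\langle\mathcal{R}\hat\alpha,\hat\alpha\rangle-2\langle\mathcal{R}\hat\beta,\hat\beta\rangle$ is not merely bounded below by, but \emph{equals}, $2(R_{1331}+R_{2332}+R_{1441}+R_{2442})$, the four mixed sectional curvatures appearing in (\ref{nboch}). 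Your intermediate computations are correct ($c\cup c=0$ does force $\|h^+\|=\|h^-\|$; equal-norm chirality pairs do sum to decomposable forms; the Weitzenb\"ock term on $\Lambda^\pm$ is $\tfrac{s}{2}-2\mathcal{R}$), and you correctly locate the fatal step: the minimizer satisfies the constrained Euler--Lagrange equation rather than $\Delta z=0$, so the identity acquires a Lagrange-multiplier correction --- precisely the term $-\tfrac12|d\mu|^2$ (equivalently $-\tfrac12|T|^2$) that enters (\ref{nboch}) and (\ref{exboch}) with the wrong sign.

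However, you understate how essential that gap is, and here the paper says more than you do. The curvature positivity your argument consumes is positivity of $R_{1331}+R_{2442}$ and of $R_{2332}+R_{1441}$, i.e.\ of two \emph{biorthogonal} curvatures, and Bettiol \cite{Bettiol} has shown that $S^2\times S^2$ --- whose intersection form is indefinite --- carries metrics of positive biorthogonal curvature. Consequently, no completion of your strategy can treat the multiplier terms as errors to be ``not overwhelmed'': if those terms could be controlled using only positivity of the biorthogonal combinations together with compactness constants, the identical proof would run on Bettiol's metrics and establish a false conclusion. Any successful argument must extract from the torsion/multiplier terms information that distinguishes positive sectional curvature from positive biorthogonal curvature; this structural obstruction, not mere technical regularity, is the paper's stated reason the conjecture remains open. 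Two smaller points. First, to invoke Theorem \ref{exists} you need the constraint set $Q$ to be nonempty, i.e.\ a closed representative of $c$ with $f\wedge f=0$ \emph{pointwise}; the cohomological identity $c\cup c=0$ does not give this by itself. You should take $c$ integral and isotropic (always possible for an indefinite unimodular form) and represent it as $\phi^*(\text{area form})$ for a map $\phi:M\to S^2$, which exists because the obstruction to compressing a map $M\to K(\mathbb{Z},2)$ into $S^2$ is exactly $c\cup c$. Second, your worry that the minimizer might lose one chirality is vacuous: $z\wedge z=0$ forces $|z^+|=|z^-|$ pointwise, so neither part can vanish identically unless $z=0$, which is excluded by $[z]=c\neq0$.
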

As $H^2(S^2\times S^2)$ has an indefinite intersection form, the Hopf conjecture follows immediately from Conjecture \ref{conj}. 

Let $h$ be a harmonic 2- form on a compact oriented Riemannian 4-manifold of positive sectional curvature. We may decompose $h$ as
$$h= h_++h_-,$$
where 
$$\ast h_{\pm} = \pm h_{\pm}.$$ Conjecture \ref{conj} implies that 
\begin{equation}\label{0prod}|h_+||h_-| \equiv 0.\end{equation}
  In a local orthonormal frame, the Bochner formula gives 
\begin{equation}\label{bochnerintro}-\frac{1}{2}\Delta|h|^2 =
  |\nabla h|^2 -  R_{ijkl}   h_{kj} h_{il}  +   \text{Ric}_{il} h_{lq}h_{iq} .  \end{equation}
Positivity of the sectional curvature implies positivity of the Ricci term in (\ref{bochnerintro}) but does not imply positivity of the remaining curvature term without additional assumptions.  
Equation (\ref{0prod}) suggests we replace $|h|^2$ in the the Bochner formula with $|h_+|^2|h_-|^2$. This leads to an indefinite Bochner formula.

  In a neighborhood of a point where $|h_+|^2|h_-|^2\not = 0$, we can choose an oriented orthonormal coframe $\{w^i\}_{i=1}^4$ and frame $\{e_i\}_{i=1}^4$ in which $h$ takes the form 
$h = aw^1\wedge w^2+bw^3\wedge w^4$, with $a>b$. In this frame, we have (see (\ref{exboch})) 
\begin{equation}\label{exoboch}-\frac{1}{4}\Delta\ln(|h_+|^2|h_-|^2)  =  R_{1331}+R_{2442}+R_{2332}+R_{1441}+|II|^2-\frac{1}{2}|T|^2, \end{equation}
where $II$ is a generalized second fundamental form and $T$ is a vectorfield which vanishes if and only if the distribution spanned by $\{e_1,e_2\}$ and the distribution spanned by $\{e_3,e_4\}$ are both integrable. 
 
The only curvature terms which appear in (\ref{exoboch})  are sectional curvatures, making such exotic Bochner formulas appear well suited to studying metrics of positive sectional curvature, if we can control the torsion terms. In order to eliminate half the torsion terms, we study nonlinear harmonic forms subject to the constraint $z\wedge z=0$. For such forms, we obtain the integrability of the distribution spanned by $\{e_3,e_4\}$, eliminating a summand of $T$. Moreover, the Euler Lagrange equations for the nonlinear harmonic still imply equation (\ref{exoboch}). In fact, these equations can be derived (\ref{fp1vareq}) using only the fact that $0$ is a critical point of  $\|\phi_t^*h\|^2$, for all smooth  one parameter families of diffeomorphisms satisfying $\phi_0(x) = x$. This does not suffice to settle (\ref{conj}) but suggests that the nonlinear harmonics deserve careful study. 
 
I would like to thank Daniel Stern for explaining the significance of Bettiol's work and for many useful discussions. I also thank Hugh Bray and Robert Bryant for helpful discussions. 

\section{An indefinite Bochner formula}

Let $h$ be a strongly harmonic 2 form on a riemannian $4-$manifold $M$. In a neighborhood of a point where $h$ is neither self dual nor antiself dual, we may choose an oriented local orthonormal frame $\{e_i\}_{i=1}^4$ and dual frame $\{w^i\}_{i=1}^4$ so that 
\begin{equation}h = aw^1\wedge w^2 + bw^3\wedge w^4.\end{equation}
Let 
$\Sigma_a$ and $\Sigma_b$ denote the distributions spanned by $\{e_1,e_2\}$ and $\{e_3,e_4\}$ respectively. 
Write 
\begin{equation}\nabla_{e_i}e_j = \gamma_{ij}^ke_k, \text{ and }c^k_{ij}:= \gamma_{ij}^k - \gamma_{ji}^k.\end{equation}
Then the torsion vectors 
\begin{equation}T_a:= c^3_{12}e_3+c^4_{12}e_4\text{ and }T_b:=c^1_{34}e_1+c^2_{34}e_2\end{equation}
vanish if and only if the respective distributions $\Sigma_a$ and $\Sigma_b$ are integrable. Define the generalized second fundamental form of $\Sigma_a$ and $\Sigma_b$ by 
\begin{equation}
II_a:=  \sum_{i,j =1,2}\sum_{k=3,4}\frac{1}{2}(\gamma^m_{ij}+\gamma^m_{ji})w^i\otimes w^j\otimes e_m,
\end{equation}
and 
\begin{equation}
II_b:=  \sum_{i,j =3,4}\sum_{k=1,2}\frac{1}{2}(\gamma^m_{ij}+\gamma^m_{ji})w^i\otimes w^j\otimes e_m.
\end{equation}
The generalized mean curvature vectors of these second fundamental forms are  given by 
\begin{equation}H_a = (\gamma^3_{11}+\gamma^3_{22})e_3 +  (\gamma^4_{11}+\gamma^4_{22})e_4,\end{equation}
and 
\begin{equation}H_b = (\gamma^1_{33}+\gamma^1_{44})e_1 + (\gamma^2_{33}+\gamma^2_{44})e_2.\end{equation}
In this notation, $dh = 0$ gives 
$$0 = a_3 - aH^3_a + bT_a^4 ,$$
$$0 = a_4 - aH^4_a -bT_a^3 ,$$
$$0 = b_1 - bH_b^1+ aT_b^2,$$
$$0 = b_2 - bH_b^2- aT_b^1,$$
and $d^*h= 0$ gives 
$$0 = a_1 - aH_b^1+ bT^2_{b}  ,$$
$$0 = a_2  - aH_b^2- bT^1_{b} ,$$
$$0 = b_3 - bH_a^3 + aT^4_{a} ,$$
$$0 = b_4 - bH^4_a - aT^3_{a} .$$
Setting $f = \ln(\sqrt{a^2-b^2}) $, gives 
\begin{equation}\label{1hflow}\nabla f = H_a+H_b;\end{equation}
so $f$ is a generalized area function. Taking second derivatives, we have 
$$  f_{11}  =   \gamma^1_{33,1}+\gamma^1_{44,1}  .$$
$$  f_{22}  =   \gamma^2_{33,2}+\gamma^2_{44,2} . $$
$$  f_{33} =  \gamma^3_{11,3}+\gamma^3_{22,3} .$$
$$ f_{44} =  \gamma^4_{11,4}+\gamma^4_{22,4} .$$

Hence
\begin{equation}\label{preboch}\sum_j f_{jj} =  \gamma^1_{33,1}+\gamma^1_{44,1} + \gamma^2_{33,2}+\gamma^2_{44,2}  +  \gamma^3_{11,3}+\gamma^3_{22,3}   + \gamma^4_{11,4}+\gamma^4_{22,4}.\end{equation}
In this frame, the sectional curvatures are given by
\begin{equation}\label{gauss}R_{ijji} = \gamma_{jj,i}^i + \gamma_{im}^i\gamma_{jj}^m - \gamma_{ij,j}^i - \gamma_{jm}^i\gamma_{ij}^m - (\gamma_{ij}^m-\gamma_{ji}^m)\gamma_{mj}^i.\end{equation}
 At a fixed  point, we may rotate our frame so that $\gamma_{i3}^4 = 0 = \gamma_{i1}^2$. In such a frame adapted to the given point, we have 
$$R_{1331} +  (\gamma_{11}^3)^2+  (\frac{\gamma_{12}^3 + \gamma_{21}^3}{2})^2 - (\frac{ c_{12}^3}{2})^2 
+(\gamma_{33}^1 )^2+  (\frac{\gamma_{34}^1 + \gamma_{43}^1}{2})^2 - (\frac{ c_{34}^1}{2})^2   = \gamma_{33,1}^1 + \gamma_{11,3}^3 .$$
Inserting this and related equations for other sectional curvatures into (\ref{preboch}) yields our indefinite Bochner formula. At a critical point,
\begin{equation}\label{exboch}
 -\Delta f  =  R_{1331}+
R_{2332}+
R_{1441} +R_{2442}   + |II_a|^2+|II_b|^2-\frac{1}{2}|T_a|^2 -\frac{1}{2}|T_b|^2 
 . \end{equation}
If the torsion vectors in the above expressions vanished or were dominated by the second fundamental form terms, Conjecture \ref{conj} would follow immediately from (\ref{exboch}).  
Wherever $h$ is neither self dual nor antiself dual, we have 
\begin{equation}\label{exbochcurv}
 -\Delta f =  \kappa   +  R_{1221}+ R_{3443} -2R_{1221}^{\Sigma_a}-2R_{3443}^{\Sigma_b}-\frac{1}{2}|T_a|^2 -\frac{1}{2}|T_b|^2 
 , \end{equation}
where $\kappa$ denotes the scalar curvature of $M$, and $R^{\Sigma_a}$ and $R^{\Sigma_b}$ denote the Riemann curvature of the leaves of the respective foliation when the distributions are integrable. Otherwise they are defined by the Gauss equations in terms of the second fundamental form and the curvature of $M$ by the same relations as the integrable case. 

The likelihood of applying these formula to the Hopf conjecture is somewhat diminished by results of Renato Bettiol. 
\begin{define}Let $\{v_j\}_{j=1}^4$ be any orthonormal basis of $T_pM$, $M$ a $4-$manifold. \\
$\frac{1}{2}(R(v_1,v_3,v_3,v_1) + R(v_2,v_4,v_4,v_2))$ is called the {\em biorthogonal curvature} associated to the plane $\sigma$ spanned by $\{v_1,v_3\}$ and its orthogonal complement $\sigma^\perp$. \end{define}
 Bettiol \cite{Bettiol} has proved that $S^2\times S^2$ admits metrics with positive biorthogonal curvature. The curvature terms appearing in (\ref{exoboch})  occur in biorthogonal pairs, and therefore more information is needed to apply this formula to Conjectures \ref{conjh} or \ref{conj}.  


 We now introduce a special case of nonlinear harmonic forms in order  to remove some of the torsion terms in  (\ref{exboch}). 
\section{Heuristic Properties of nonlinear harmonic forms}\label{gframe}

In this section, in order to anticipate the fruits of future regularity analysis,  we formally investigate properties of minimizers of $\|z\|^2$ in a fixed degree 2 cohomology class on a compact 4-manifold, subject to the constraint that $z\wedge z = 0$. Again, we emphasize that the discussion in this section  is heuristic only. In subsequent sections we prove several of these heuristic results.

Let $f$ be a closed $2$-form satisfying $f\wedge f=0$. 
Suppose we consider closed 2-forms $z$ cohomologous to $f$ such that $\|z\|_{L_2}^2$ is minimal subject to the constraint $z\wedge z=0.$ 
Then formally 
$\langle d^*z,b\rangle_{L_2} = 0$ for all $b$ satisfying $z\wedge db = 0$. Hence $d^*z$ is perpendicular to the kernel of $e(z)d$, 
where $e(\phi)$ denotes exterior multiplication on the left by $\phi$. Thus $d^*z$ is in the image of $d^*e^*(z)$. That is 
\begin{equation}d(\ast z -\mu z)= 0,\end{equation}
for some function $\mu$, which may be viewed formally as a Lagrange multiplier. 
Where $z\not = 0,$ let $\{e_i\}_{i=1}^4$ and $\{w^j\}_{i=1}^4$ be an oriented orthonormal frame and dual frame so that 
 $z = aw^1\wedge w^2$, with $a> 0$. Then $dz = 0$ implies 
\begin{equation}\label{mu3}0 = a_3-a(\gamma^3_{11} + \gamma^3_{22}),\end{equation}
\begin{equation}\label{mu4}0 = a_4-a(\gamma^4_{11} + \gamma^4_{22}),\end{equation}
\begin{equation}\label{mu5}0 = ac^1_{34} = ac^2_{34}.\end{equation}
So, $\{e_3,e_4\}$ span an integrable distribution. The constraint on $d^*z$ gives 
 \begin{equation}\label{mu1}0 =    a (c^3_{12} -\mu_4),\end{equation}
 \begin{equation}\label{mu2}0 =   a(c^4_{12} +\mu_3),\end{equation}
 \begin{equation}\label{mu6}0 = a_1-a (\gamma^1_{33}+\gamma^1_{44}), \end{equation}
 \begin{equation}\label{mu7}0 =  a_2-a (\gamma^2_{33}+\gamma^2_{44}).  \end{equation}
 Equations (\ref{mu3}), (\ref{mu4}), (\ref{mu6}), and (\ref{mu7}) can again be expressed more succinctly in the notation of (\ref{1hflow}) as
\begin{equation}\label{hflow}\nabla\ln(a) = H_a+H_b.\end{equation} 
Equations (\ref{mu1}) and (\ref{mu2}) imply 
\begin{equation}\label{Tmu}aT\mu = 0,\end{equation}
where $$T:= c_{12}^3e_3+c_{12}^4e_4.$$
 Equations (\ref{mu1}) and (\ref{mu2}) also imply that we may view $T$ as the Hamiltonian vector field for $\mu$ restricted to the leaves of the foliation associated to the integrable distribution spanned by $\{e_3,e_4\}$, equipped with the symplectic form $ w^3\wedge  w^4$ pulled back to the leaves. Let $\phi_s$ denote the one parameter family of diffeomorphisms generated by $aT$ on the space $a>0$. Equations (\ref{Tmu}) implies that $\phi_s^*\mu = \mu$. We also have $\phi_s^*z = z $ since $i_Tz = 0$ and $z$ is closed, where $i_X$ denotes interior product with $X$.  The volume form is also invariant under $\phi_s$ since  
$$d i_{aT} dvol = d(c_{12}^3aw^1\wedge w^2\wedge w^4 -c_{12}^4aw^1\wedge w^2\wedge w^3) 
= d(d\mu\wedge z) = 0.$$  
Hence, the nonlinear harmonic form comes equipped with a foliation with a symplectic structure, and a distinguished Hamiltonian whose Hamiltonian vector field preserves both the nonlinear harmonic form and the volume form. 
 
Returning to Bochner formulas, let now $f = \ln(a)$. At a critical point $p$ of $f$, the mean curvature of the leaf through $p$ is zero. The Bochner formula at the critical point becomes  
\begin{equation}\label{nboch}-\Delta f =   R_{1331}+
R_{2332}+
R_{1441} +R_{2442}  +|II_a|^2+|II_b|^2   - \frac{1}{2}|d\mu|^2. \end{equation}
This is essentially identical to  (\ref{exboch}). 
Once again the curvatures appear in biorthogonal pairs. Let $f$ have a maximum at $p\in M$. Considering the positivity of the full hessian rather than simply its trace does not appear to be useful.

\section{Existence of quadratic nonlinear harmonic forms}

In this section, using standard techniques, we prove the existence of nonlinear harmonic forms in $L_2$ subject to quadratic constraints. 
Let $H_s$ denote the Sobolev space of differential forms $\phi$ with $(1+\Delta)^{s/2}\phi\in L_2$ and norm 
$\|\phi\|_{H_s} = \|(1+\Delta)^{s/2}\phi\|_{L_2}.$   
\begin{theorem}\label{exists}Let $M^n$ be a compact Riemannian manifold. Let $f:=(f_1,\dots,f_r)$ be an $r-$tuple of closed differential forms with $$p^k(f):= \sum_{i,j}p_{ij}^k\wedge f_i\wedge f_j = 0,\,\,1\leq k\leq D,$$ for some smooth differential forms  $p_{ij}^k$. Let $h_i$ denote the harmonic representative of $f_i$. 
Set $$Q:=\{y=(y_1\cdots,y_r)\in H_1 :p^k(h +dy )  = 0, \,\,1\leq k\leq D \text{ and }d^*y_i=0\}.$$ 
Let $E(y) := \|h+dy\|^2:=\sum_i\|h_i+dy_i\|^2$. Let 
 $\nu_0:= \inf\{E(y):y\in Q\}.$ 
Then there exists $y\in H_1$ such that $E(y) = \nu_0.$
\end{theorem}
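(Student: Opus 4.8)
I would take a minimizing sequence $y^{(m)} \in Q$ with $E(y^{(m)}) \to \nu_0$ and extract a weakly convergent subsequence whose limit lies in $Q$ and achieves the infimum. The two nontrivial issues are (i) obtaining enough compactness to pass to a limit, and (ii) showing that the nonlinear constraint set $Q$ is weakly closed, so that the weak limit actually satisfies the quadratic constraints $p^k = 0$.

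\emph{Coercivity and weak compactness.} First I would observe that $E(y) = \sum_i \|h_i + dy_i\|^2 = \sum_i(\|h_i\|^2 + \|dy_i\|^2)$, since $h_i$ is harmonic and hence $L_2$-orthogonal to $dy_i$. Thus $E$ controls $\|dy_i\|_{L_2}$. Combined with the gauge condition $d^* y_i = 0$ built into $Q$ and the Hodge/Gaffney estimate on the compact manifold $M$ (using that $y_i$ may be taken orthogonal to harmonic forms, or handling the harmonic part separately), one gets a uniform bound $\|y^{(m)}\|_{H_1} \le C$. Since $H_1$ is a Hilbert space, a subsequence converges weakly in $H_1$, and by Rellich--Kondrachov the embedding $H_1 \hookrightarrow L_2$ is compact, so we get strong $L_2$ convergence $y^{(m)} \to y$. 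Weak lower semicontinuity of the norm then yields $E(y) \le \liminf E(y^{(m)}) = \nu_0$, while the gauge conditions $d^* y_i = 0$ pass to the weak limit since $d^*$ is continuous from $H_1$ to $H_0$.

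\emph{Weak closedness of the constraint — the main obstacle.} The delicate point is verifying that the weak limit still satisfies $p^k(h+dy) = 0$. The constraint is \emph{quadratic} in $dy$: schematically $p^k$ involves terms $p^k_{ij}\wedge (h_i+dy_i)\wedge(h_j+dy_j)$, so it contains products $dy_i \wedge dy_j$. Products are not weakly continuous in general, so one cannot simply pass to the limit term by term. The right tool here is compensated compactness in the spirit of the div-curl lemma: each factor $h_i + dy_i$ is \emph{closed} (being $h_i$ plus an exact form), and wedge products of closed forms behave well under weak convergence precisely because the antisymmetric derivative structure controls the troublesome quadratic interaction. Concretely, I would test $p^k(h+dy^{(m)})$ against a smooth compactly supported form $\phi$, integrate by parts to move one derivative off one factor (using $d(h_i+dy_i)=0$), thereby rewriting the pairing so that it is linear in the highest-order term $dy^{(m)}$ and picks up only a lower-order factor that converges \emph{strongly} in $L_2$. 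The product of a weakly convergent sequence with a strongly convergent one converges, so the pairing passes to the limit and one concludes $\int p^k(h+dy)\wedge \phi = 0$ for all $\phi$, hence $p^k(h+dy)=0$. This shows $y \in Q$, and combined with the lower semicontinuity bound gives $E(y) = \nu_0$, completing the proof.

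I expect the div-curl/compensated-compactness step to be where all the real work lies; the coercivity and extraction of a weak limit are standard Hodge-theory-plus-Rellich arguments. One technical subtlety to flag is ensuring $Q$ is nonempty (so $\nu_0 < \infty$), which follows since $y=0$ satisfies the hypotheses by the assumption $p^k(f)=0$ together with the fact that $h_i$ is cohomologous to $f_i$, so that $p^k(h)$ and $p^k(f)$ differ by an exact form and the constraint descends to the harmonic representatives.
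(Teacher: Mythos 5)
Your main argument is essentially the paper's own proof: a minimizing sequence, an $H_1$ bound via $d^*y_i=0$ and the Gaffney/Hodge estimate (after projecting out or separately handling the harmonic parts of the $y_i$, as you note), weak $H_1$ plus strong $L_2$ convergence of a subsequence, weak lower semicontinuity for the energy, and --- for the key step --- passing the quadratic constraint to the limit by testing against a smooth form and integrating by parts so that the dangerous quadratic term pairs a strongly $L_2$-convergent factor ($y^{(m)}_i - y_i$, by Rellich) against a merely bounded/weakly convergent one ($dy^{(m)}_j - dy_j$). That is exactly the computation in the paper: it splits $p^k(h+dy^{(m)})$ into $p^k(h+dy)$, cross terms killed by weak convergence, and the pure difference term $p^k(dy^{(m)}-dy)$ killed by precisely the integration by parts you describe. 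So on the main line your proposal is correct and matches the paper's route; the only cosmetic difference is that the paper's Hilbert-space identity $\nu_0 = \|h+dy\|^2 + \lim_m \|dy^{(m)}-dy\|^2$ also yields strong $H_1$ convergence of the minimizing sequence, which your lower-semicontinuity argument does not, but that extra conclusion is not part of the statement.

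However, your closing remark on nonemptiness of $Q$ is wrong as stated. From $p^k(f)=0$ and $f_i = h_i + (\text{exact})$ you can only conclude that $p^k(h)$ is \emph{exact}, not that it vanishes; an exact form need not be zero, so in general $y=0 \notin Q$. Indeed, if the constraint automatically ``descended to harmonic representatives,'' the entire notion of a nonlinear harmonic form would be pointless: the harmonic representative itself would already be the constrained minimizer. The paper's starting point is precisely that $p^k(h)\neq 0$ in general. The correct way to see $Q\neq\emptyset$ is to write $f_i - h_i = d\beta_i$, Hodge decompose $\beta_i$, and let $y_i$ be its co-exact part, so that $d^*y_i = 0$ and $h_i + dy_i = f_i$; then $p^k(h+dy)=p^k(f)=0$ and this $y$ lies in $Q$ (and lies in $H_1$, e.g.\ $y_i = d^*\Delta^{-1}(f_i-h_i)$ with $\Delta^{-1}$ the Green's operator is smooth when $f_i$ is). This slip is harmless for your existence argument, since the fix is immediate, but the reasoning you gave reflects a genuine misunderstanding of why the constraint is nontrivial.
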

\begin{proof}
Let $\{y^m\}_{m=1}^\infty\subset Q$ be a minimizing sequence for $E$. Then the sequence is bounded in $H_1 $. By passing to a subsequence, we may assume that 
$y^m\stackrel{H_1}{\rightharpoonup} y$ and $y^m\stackrel{L_2}{\to} y$, as $m\to\infty,$ for some $y\in H_1$. 
 Then for all smooth forms $ \phi$  and  $\forall k$,
$$0 = \lim_{m\to\infty} \int  \phi\wedge p^k(h +dy^m)  =  \int \phi\wedge p^k(h +dy ) + \lim_{m\to\infty}\int \phi \wedge p^k(dy^m -dy ) $$
$$=  \int \phi \wedge p^k(h +dy )- \lim_{m\to\infty}\int (-1)^{deg(\phi\wedge p^k_{ij})}d(\phi \wedge p_{ij}^k)\wedge (y^m_i- y_i)\wedge ( dy^m_j-dy_j)
=  \int \phi \wedge p^k(h +dy ).$$
Hence  $y\in Q.$ 
Next we observe that 
$$\nu_0 = \lim_{m\to\infty}\|h+dy^m\|^2 = \|h+dy \|^2 + \lim_{m\to\infty}\| dy^m-dy\|^2  $$
implies $E(y) = \nu_0$, and $\lim_{m\to\infty}\| dy^m-dy\|^2   =0$. In particular, 
$y^m\stackrel{H_1}{\to} y$. 
\end{proof} 
We call 
\begin{equation}\label{nlh}z:=h+dy
\end{equation}
a nonlinear harmonic representative of $f$. Observe $d^*z\in H_{-1}^1$. 

It is not clear that we may replace the minimizing sequence by a sequence of smooth $y_n$. Smooth minimizing sequences would likely significantly aid regularity theorems. See \cite{MR}. We cannot yet rule out, however, the possibility that the minimal energy for $y\in H_1$ is strictly smaller than the infimum of energies for smooth $y$. In particular, we have not established that smooth forms satisfying the constraint are dense in the set of $H_1$ forms satisfying the constraint. Similar issues arise in the study of harmonic maps.  
\subsection{Price Monotonicity}

Recall the Morrey norm defined by 
\begin{equation}
\|u\|_{L^{p,\mu}}: = (\sup_{x\in M, 0<r<\text{diam}(M)}r^{-\mu}\int_{B_r(x)}| u|^pdv)^{\frac{1}{p}}.
\end{equation}
Then the Morrey space $L^{p,\mu}$ is defined to be the subset of $L^p$ with finite $L^{p,\mu}$ norm. 
 \begin{theorem}
If the nonlinear harmonic form $z$ defined in (\ref{nlh}) is homogeneous of degree $p$, and if the coefficients $p^k_{ij}$ are constants, then $z\in L^{2,n-2p}$. 
\end{theorem}
\begin{proof} 
Let $Y$ be a smooth vectorfield. Let $\phi_t$ be the  one parameter family of diffeomorphisms generated by $Y$. Then $\phi_t$ preserves $H_1$ and since the $p^k_{ij}$ are constant, $p^k(f^*\zeta) = f^*p^k(\zeta) = 0$ if $p^k(\zeta) = 0.$  Hence 
$\|\phi_t^*z\|_{L_2}^2\geq \|z\|^2_{L_2}.$ In particular, (see (\ref{1varf}) or \cite[p. 142]{Price})
\begin{equation}\label{price}0 = \int \langle z,(-\frac{1}{2}div(Y)+Y^i_{;j}e(w^j)i_{e_i})z\rangle.\end{equation}
For smooth $z$, this is equivalent to 
\begin{equation}\label{rescue}\langle d^*z,i_Yz\rangle  = 0.\end{equation}
Equation (\ref{price}) allows us to prove monotonicity formulas in the usual fashion. We include the details for the convenience of the reader. 

Fix geodesic coordinates $\{x^i\}$ centered at some $p\in M$, and for some function $y$ choose 
$$Y := y(\frac{r^2}{2s}) x^j\frac{\p}{\p x^j}. $$
Then 
$$div(Y) = \frac{1}{\sqrt{g}}\frac{\p}{\p x^j}(\sqrt{g}y(\frac{r^2}{2s}) x^j)= 
ny(\frac{r^2}{2s})    + \frac{r^2}{s} y'(\frac{r^2}{2s}) +\frac{r}{2}y(\frac{r^2}{2s})\frac{\p \ln(g)}{\p r}$$
$$= ny(\frac{r^2}{2s})   -2s\frac{d}{ds} y(\frac{r^2}{2s}) +\frac{r}{2}y(\frac{r^2}{2s})\frac{\p \ln(g)}{\p r},$$
and
$$Y^i_{;j}e(w^j)i_{e_i} =  y(\frac{r^2}{2s}) e(dx^j)i_{\frac{\p}{\p x^j}} +  y'(\frac{r^2}{2s})\frac{r^2}{s} e(dr) i_{\frac{\p}{\p r}} + y(\frac{r^2}{2s}) x^m\Gamma_{jm}^ie(dx^j)i_{\frac{\p}{\p x^i}}$$
$$=  y(\frac{r^2}{2s}) e(dx^j)i_{\frac{\p}{\p x^j}} - 2s\frac{d}{ds}  y(\frac{r^2}{2s})  e(dr) i_{\frac{\p}{\p r}} + y(\frac{r^2}{2s}) x^m\Gamma_{jm}^ie(dx^j)i_{\frac{\p}{\p x^i}}$$

Hence 
$$s^{p-1-\frac{n}{2}}(-\frac{1}{2}div(Y)+Y^i_{;j}e(w^j)i_{e_i})z = (  \frac{d}{ds}(s^{p -\frac{n}{2}} y )  - 2s^{p -\frac{n}{2}} \frac{d y}{ds}    e(dr) i_{\frac{\p}{\p r}} + s^{p-1-\frac{n}{2}}y w )z  ,
$$
where $w:=- \frac{r}{4} \frac{\p \ln(g)}{\p r}+  x^m\Gamma_{jm}^ie(dx^j)i_{\frac{\p}{\p x^i}}.$
Inserting this expression into  (\ref{price}) and multiplying by $e^{\lambda s}$ gives 
\begin{equation}\label{price2}\frac{d}{ds}(e^{\lambda s}s^{p -\frac{n}{2}}\int   y  |z|^2dv)    =  e^{\lambda s}s^{p -\frac{n}{2}}\int (   2\frac{d y}{ds} | i_{\frac{\p}{\p r}}z|^2+ y \langle z,(\lambda-\frac{w}{s} )z\rangle )dv.\end{equation}
Integrate this equality in $s$ from $\sigma^2$ to $\tau^2$ to get 
\begin{multline}\label{priceclub}  e^{\lambda \tau^2}\tau^{2p -n} \int   y(\frac{r^2}{2\tau^2} ) |z|^2dv -
e^{\lambda \sigma^2}\sigma^{2p -n}\int   y(\frac{r^2}{2\sigma^2})  |z|^2dv  \\   =   \int_{\sigma^2}^{\tau^2}e^{\lambda s}s^{p -\frac{n}{2}}\int (   2\frac{d y}{ds} | i_{\frac{\p}{\p r}}z|^2+ y \langle z,(\lambda-\frac{w}{s} )z\rangle )dvds.\end{multline}
Replace $y$ with a sequence $\{y_n\}$ of $C^1$  monotone decreasing functions, supported in $[0,1]$ and converging to the characteristic function of $[0,\frac{1}{2}]$.  Using  the support of $y_n$ and the fact that $|w|= O(r^2)$, we see that we can pick $\lambda>0$ depending on geometric data so that $\lambda > \frac{w}{s}$ on the support of $y$. Hence, the right hand side of (\ref{priceclub}) is nonnegative, yielding 
\begin{equation}\label{pricemono}  e^{\lambda \tau^2}\tau^{2p -n} \int_{B_{\tau}}|z|^2dv \geq 
e^{\lambda \sigma^2}\sigma^{2p -n}\int_{B_\sigma}   |z|^2dv   .\end{equation}
This monotonicity relation immediately implies $z\in L^{2,n-2p}$. 
\end{proof}
 
In the study of harmonic maps and Yang Mills connections, monotonicity relations such as (\ref{pricemono}) are one of the main ingredients in establishing regularity results. 
 
\section{Euler Lagrange Equations}
Write 
$$P(z) = (p^1(z),\cdots p^D(z)),$$
with $p^k(z)$ defined as in Theorem \ref{exists}, still assumed to be quadratic.  Let $V = P^{-1}(0)$. 
Let $z(t)=h+db(t)$ be a curve in $V$, with $b(t)$ a $C^1$ curve in $H_1$. In components, write $z_i(t) = h_i + db_i(t)$, with $h_i$ harmonic and $d^*b_i=0$. Then  
$$0 = \frac{d}{dt}P(z(t)) = DP_zd\dot b = 0,$$
where $DP_z$ denotes the derivative of $P$ at $z$. 
Let $K_z$ denote the kernel of $DP_zd$ in $H_1$. 
\begin{define}
We call $K_z$ the {\em formal tangent space} to $V$ at $z$. 
We say $V$ is {\em unobstructed} at $z$ if for each $v\in K_z$, there is a differentiable curve 
$c(t)$ in $V$ so that $c(0) = z$ and $\dot c(0) = v$.
\end{define}
For simplicity, we now assume  that $\dim M = 4 $ and that the $p^k_{ij}$ are closed.  
Then since $H_1\subset L^4 $ and $z\in L_2$, we may view 
$DP_z$ as a bounded linear map from $H_1\to H_{-1}.$ In the quadratic case under consideration, this map is an algebraic operator given by the adjoint of left multiplication by the product of smooth forms and components of $z$. For example, when 
$p(z) = z\wedge z$, $DP_z = 2e^*(z)$. 
Hence $dDP_z$ defines a bounded linear map from $H_1\to H_{-2}.$
Here we have used $dp^k_{ij} = 0$ to substitute $dDP_z$ for $ DP_zd$. We let  
$d^*$ and $ DP_z^*$ denote the formal $L_2$ adjoint of these two operators, and $(dDP_z)^{\ast_H} = (1+\Delta)^{-1} DP_z^*d^*(1+\Delta)^{-2}$ denotes the Hilbert space adjoint of $dDP_z:H_1\to H_{-2}$. Then  
\begin{equation}\label{perp}K_z = (\text{Im }(dDP_z)^{\ast_H})^{\perp_{H_1}} = ((1+\Delta)^{-1} DP_z^*d^* H_{2})^{\perp_{H_1}}.\end{equation}
Similarly, if  $E(h+db):=\|h+db\|^2$, then
$$\frac{d}{dt}E(b(t)) = \sum_i\langle  d^*z_i, \dot b_i\rangle_{L_2} =  \sum_i\langle (1+\Delta)^{-1}d^*z_i, \dot b_i\rangle_{H_1}:= DE_{b(t)}\dot b.$$
If $z  = h+db $ is an $E$ minimizer, then 
$$DE_{b }v = 0, \,\,\forall v\in K_z,$$
such that there exists a differentiable  curve  $\beta(t)$  in $V$ with $b =\beta(0) $ and $v = \dot\beta(0).$
 
Thus, if $z= h+db$ minimizes $E$ {\em and }$K_z$ {\em is unobstructed}, then the Euler Lagrange equation becomes
\begin{equation} d^*z = \lim_{j\to \infty}DP_z^*d^*\mu_j,\text{ in }H_{-1},\end{equation}
for some  sequence $\mu_j$  in $H_2$. When the image is closed, we can, of course, replace the limit with 
$d^*z = DP_z^*d^*\mu,$ some $\mu \in H_2$. 

Determining when the tangent space is unobstructed is a basic aspect of the regularity theory of nonlinear harmonic forms. We first examine the information following from diffeomorphism invariance of the constraints. 

\section{Diffeomorphism Variations}
In this section we explore first and second variation formulas associated with variation by diffeomorphism. The resulting first variation formula is standard and underlies Price's inequality. The second variation formulas are perhaps less familiar.

If $z\wedge z = 0$, or more generally, if the constraint $p(z)$ satisfies $\phi_t^*p(z) = p(\phi_t^*z),$ for the smooth one parameter family of diffeomorphism $\phi_t$ with $\phi_0(x) = x$, then $\phi_t^*z$ defines a curve in the constraint space, which we can use to verify some of  the formal Euler Lagrange equations. 
Observe that when $z$ is smooth and nonvanishing, then locally there exist coordinates so that if $z\wedge z = 0$ then 
$$z = dx^1\wedge dx^2,$$ and smooth variations of $z$ are given by variations of the functions $x^1$ and $x^2$. 
In particular, the variation is given by the Lie derivative of $z$ by the vectorfield 
$$X:= \dot x^1 \frac{\p}{\p x^1}-\dot x^2 \frac{\p}{\p x^2}.$$
So, we see that in this instance not much information is lost in considering only variations by diffeomorphisms. 
Let $z$ be a minimizer for  $\|\phi_t^*z\|^2$, for all one parameter families of diffeomorphisms $\phi_t$ generated by a vectorfield. Then we have 
\begin{equation}\|z\|^2\leq \|\phi_t^*z\|^2 = \int_{M}z\wedge \phi_{-t}^*(\ast\phi_t^*z).
\end{equation} 
Hence we deduce a second variation formula (and the first variation formula leading to Price's inequality) by expanding 
$$\ast_t:= \phi_{-t}^*\ast\phi_t^* $$ 
to second order in $t$. Express $\ast$ as , $$ \ast=\pm c(dvol),$$
where, $c(dvol)$ denotes clifford multiplication by the volume form, and the sign depends on dimension and degree.    
In a local oriented orthonormal frame $\{v_i\}_{i=1}^n$ and dual coframe $\{\omega^i\}_{i=1}^n$, we write 
$$c(dvol) = c(\omega^1)\cdots c(\omega^n)=(e(\omega^1)-i_{v_1})\cdots(e(\omega^n)-i_{v_n}),$$
where $i_v$ denotes interior multiplication by $v$. It is also convenient to define 
$$\hat c(\omega^i) = e(\omega^i)+i_{v_i}= e(\omega^i)+e^*(\omega^i).$$

Observe that 
$$ \phi_{-t}^* e(\omega^{j })\phi_t^*  = e(\phi_{-t}^*\omega^{j }) ,$$
and 
$$\phi_{-t}^* i_{v_{j }}  \phi_t^*  =   i_{d\phi_{t}v_{j}}  . $$ 
Hence 
\begin{equation}\label{2nd}\begin{split} \phi_{-t}^*(c(\omega^{j })\phi_t^*  &= (e(\phi_{-t}^*\omega^{j })-i_{d\phi_{t}v_{j}})  = c(\omega^j) - t(e(L_X\omega^j)-i_{L_Xv_j})+\frac{t^2}{2}(e(L_X^2\omega^j)-i_{L_X^2v_j})\\
&=:c(\omega^j) - tL_Xc(\omega^j)+\frac{t^2}{2}L_X^2c(\omega^j).\end{split} \end{equation} 
Here $L_X$ denote the Lie derivative with respect to $X$. On differential forms,
\begin{equation}\label{lied}L_X = \{e(\omega^m)\nabla_{v_m},X^ke^*(\omega^k)\} =  e(\omega^m)e^*(\omega^k) X^k_{;m}+\nabla_{X}.\end{equation}
On vector fields, 
\begin{equation}\label{liedv}L_X  = \nabla_{X}  -  X^m_{;k}v_m\otimes\omega^k.\end{equation}
For convenience, fix a point $p$ and choose an orthonormal frame and coframe which is radially covariant constant (in geodesic coordinates) in a neighborhood of $p$. Then we have 
$$\nabla_Y v_j = \frac{1}{2}R(r\frac{\p}{\p r},Y,v_i,v_m)v_m+O(r^2).$$
In particular, $(\nabla_Xv_i)(p) = (\nabla_X^2v_i)(p) = 0.$ In this frame, at $p$,  we have 
\begin{equation}\label{2nd2} \begin{split} \phi_{-t}^* c(\omega^{j })\phi_t^*   & = c(\omega^j) 
- t(e(  X^j_{;m}\omega^m)+i_{X^m_{;j}v_m})\\
&+\frac{t^2}{2}(( X^k_{;m} X^j_{;k}  +  X^k X^j_{;mk})e(\omega^m)+(X^k X^m_{;jk}-  X^m_{;k} X^k_{;j})i_{  v_m   }) +O(t^3)\\
& = c(\omega^j) 
- t(e(  X^j_{;m}\omega^m)+i_{X^m_{;j}v_m})+\frac{t^2}{2}((  (\nabla_{X}X)^j_{;m} - R(X,v_m,v_j,X))e(\omega^m)\\
&+((\nabla_{X}X)^m_{;j} -R(X,v_j,v_m,X)-  2X^m_{;k} X^k_{;j})i_{  v_m   }) +O(t^3).\end{split}  \end{equation} 
We expand this two different ways. Let $A$ and $B$ denote the skew and symmetric summands of $\nabla X$ respectively. 
\begin{equation}\label{2nd2b2} \begin{split}\phi_{-t}^* c(\omega^{j })\phi_t^*&=   c(\omega^j) 
-  t(B^m_{j}\hat c(\omega^m)- A^m_{j} c(\omega^m ))\\
&+\frac{t^2}{2}(( (A^2+B^2)^j_m   +   (X^k A^j_{;m })_{;k}-   X^k_{;k} A^j_{;m})  c(\omega^m) \\
&+ ( \{A,B\}^j_m  +   X^k B^j_{;mk} ) \hat c(\omega^m) ) +O(t^3),\end{split} \end{equation} 
and 
\begin{equation}\label{2nd2bhold} \begin{split}\phi_{-t}^* c(\omega^{j })\phi_t^*&=   c(\omega^j) 
-  t(B^m_{j}\hat c(\omega^m)- A^m_{j} c(\omega^m ))\\
&+\frac{t^2}{2}(( (A^2+B^2+\{A,B\})^m_{;j} -\frac{1}{2}(\nabla_XX)^m_{;j }   +  \frac{1}{2}(\nabla_X X)^j_{;m} ) c(\omega^m) \\
&+ (  -  X^m_{;k} X^k_{;j} + \frac{1}{2}(\nabla_{X}X)^j_{;m}- R(X,v_m,v_j,X)+\frac{1}{2}(\nabla_XX)^m_{;j}) \hat c(\omega^m) ) +O(t^3).\end{split} \end{equation} 
These two expressions lead to two different expressions for $\ast^{-1}\ast_t$. From (\ref{2nd2b2}), we have 
\begin{equation}\label{var2a}\begin{split}\ast^{-1}\ast_t = I+ tB^j_mc(\omega^j)\hat c(\omega^m) +  &\frac{t^2}{2}(-  X^k B^j_{m;k }c(\omega^j)\hat c(\omega^m)+ (\sum_{i,m}B^i_m c(\omega^i)\hat c(\omega^m))^2) +O(t^3).
\end{split}
\end{equation}
From (\ref{2nd2bhold}) we have 
\begin{equation}\label{var2b}\begin{split}\ast^{-1}\ast_t = I+t&B^j_mc(\omega^j)\hat c(\omega^m) + \frac{t^2}{2}([(A^2+B^2)^j_{;m })+R(X,v_m,v_j,X)\\
&-  \frac{1}{2}(\nabla_{X}X)^j_{;m}-\frac{1}{2}(\nabla_XX)^m_{;j}]c(\omega^j)\hat c(\omega^m)+ (\sum_{i,m}B^i_m c(\omega^i)\hat c(\omega^m))^2) +O(t^3).
\end{split}
\end{equation}
These computations are simplified by diagonalizing $B$ and skew diagonalizing $A$ (not simultaneously). It is also useful to note that   the degree preserving summand of $c(\omega^j)\hat c(\omega^m)$ satisfies
$$c(\omega^j)\hat c(\omega^m) = c(\omega^m)\hat c(\omega^j).$$

The $O(t)$ term in either (\ref{var2a}) or (\ref{var2b}) yields the first variation formula. 

\begin{proposition}\label{1varf}First variation formula. Let $\|\phi_t^*z\|^2$ have a critical point at $t=0$ for all smooth 1 parameter families of diffeomorphisms generated by vector fields $X.$ Then 
\begin{equation}\label{1var}0= \langle z,(X^j_{;m}+X^m_{;j})c(\omega^j)\hat c(\omega^m)z\rangle_{L_2}.
\end{equation}
\end{proposition}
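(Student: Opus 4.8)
The plan is to push all of the $t$-dependence off the (merely $L_2$) form $z$ and onto the smooth operator $\ast_t := \phi_{-t}^*\ast\phi_t^*$, so that computing the first variation reduces to differentiating an explicitly expanded endomorphism of the exterior algebra rather than differentiating $z$ itself. First I would record the conjugation identity already noted in the text preceding the proposition: since $\phi_{-t}$ is orientation preserving and isotopic to the identity, pullback preserves the integral of top forms, and $\phi_{-t}^*\phi_t^* = (\phi_t\circ\phi_{-t})^* = \mathrm{id}$, so
\begin{equation*}
\|\phi_t^*z\|^2 = \int_M \phi_t^*z\wedge\ast\phi_t^*z = \int_M\phi_{-t}^*\bigl(\phi_t^*z\wedge\ast\phi_t^*z\bigr) = \int_M z\wedge\ast_t z.
\end{equation*}
The virtue of this rewriting is that $z$ now sits frozen while all of the $t$-dependence is carried by $\ast_t$, whose local expansion in the radially covariant constant frame is supplied to the needed order in (\ref{var2a}).

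Next I would write $\ast_t = \ast\,(\ast^{-1}\ast_t)$ and insert the first-order term of (\ref{var2a}), namely $\ast^{-1}\ast_t = I + tB^j_m c(\omega^j)\hat c(\omega^m)+O(t^2)$, where $B$ is the symmetric part of $\nabla X$. (Conceptually, the antisymmetric part $A$ is absent from $\ast^{-1}\ast_t$ at first order because it generates an infinitesimal isometry that commutes with $\ast$, leaving only the strain $B$; this is exactly what (\ref{var2a}) confirms.) Since $z\wedge\ast(\,\cdot\,)$ extracts precisely the degree-preserving component, this gives
\begin{equation*}
\|\phi_t^*z\|^2 = \langle z,z\rangle_{L_2} + t\,\langle z, B^j_m c(\omega^j)\hat c(\omega^m)z\rangle_{L_2} + O(t^2).
\end{equation*}
Differentiating at $t=0$ and invoking the hypothesis that $t=0$ is a critical point yields $0 = \langle z, B^j_m c(\omega^j)\hat c(\omega^m)z\rangle_{L_2}$; substituting $2B^j_m = X^j_{;m}+X^m_{;j}$ produces (\ref{1var}) exactly, the overall factor $\tfrac12$ being immaterial once the pairing is set to zero.

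The only genuine points needing care are analytic rather than algebraic. Because $z$ is assumed only to lie in $L_2$, one cannot differentiate $\phi_t^*z$ directly, which is precisely why the conjugation step is essential: after it, $t\mapsto \ast^{-1}\ast_t$ is a smooth family of bounded bundle endomorphisms whose coefficient functions (the $B^j_m$ together with the higher-order coefficients such as $X^kB^j_{m;k}$) are smooth, since $X$ is smooth and $M$ is compact. Hence the $O(t^2)$ remainder has uniformly bounded operator norm, differentiation under the integral sign is justified by dominated convergence against $|z|^2\in L_1$, and the first variation is exactly the pairing of the frozen $z$ with $\tfrac{d}{dt}\big|_0(\ast^{-1}\ast_t)$. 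I expect this justification of term-by-term differentiation for an $L_2$ form — together with the routine check that the pointwise expansion (\ref{var2a}), computed at a fixed point in an adapted frame, is tensorial and hence valid at every point so that it may be integrated — to be the main (and quite modest) obstacle; the algebra is immediate once (\ref{var2a}) is in hand.
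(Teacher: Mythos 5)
Your proof is correct and takes essentially the same route as the paper: rewrite $\|\phi_t^*z\|^2 = \int_M z\wedge \ast_t z$ with $\ast_t := \phi_{-t}^*\ast\phi_t^*$ so that all $t$-dependence sits on a smooth family of bundle endomorphisms, read off the $O(t)$ coefficient $B^j_m c(\omega^j)\hat c(\omega^m)$ from (\ref{var2a}), and set the derivative at $t=0$ to zero, using $2B^j_m = X^j_{;m}+X^m_{;j}$ to obtain (\ref{1var}). The only differences are cosmetic: you cite the expansion (\ref{var2a}) rather than rederiving it (which is the paper's actual computational content), and you make explicit the dominated-convergence justification for differentiating under the integral with $z$ only in $L_2$, a point the paper leaves implicit.
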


Suppose now $z$ is a differential $k-$form satisfying (\ref{1var}). Choose a frame $\{v_j\}_j$  at $p$ such that the bilinear form $Q(v,w):=\langle i_vz,i_wz\rangle$ is diagonal. 
In this frame, we have 
\begin{equation}\label{niceframe}\langle i_{v_j}z,i_{v_m}z\rangle(p) = 0, \text{ for }j\not = m.\end{equation}
In such a frame, (\ref{1var}) becomes 
\begin{equation}\label{f1var}0= 2\int \sum_jX^j_{;j}  |i_{v_j}z|^2dv - \int div(X)|z|^2dv.
\end{equation}
Integration by parts yields 
\begin{equation}\label{fp1var}0=   \langle X, \sum_j  2v_j(|i_{v_j}z|^2)v_j-\nabla|z|^2\rangle
-2\langle  X,\sum_{j,m}\gamma^j_{mm}  (|i_{v_j}z|^2-|i_{v_m}z|^2)v_j\rangle  
.\end{equation}
Hence (no sum over $j$) 
\begin{equation}\label{fp1vareq} v_j( |i_{v_j}z|^2-\frac{1}{2}|z|^2)  = \sum_{ m}\gamma^j_{mm}  (|i_{v_j}z|^2-|i_{v_m}z|^2).\end{equation}
In dimension $4$,  $Q$ is a scalar multiple of the metric at $p$ if and only if $z(p)$ is self dual or antiself dual. Otherwise $Q$ has 2 distinct eigenvalues at $p$. In higher dimensions, the strong assumption that $Q$ has at most two distinct eigenvalues at each point leads to new Bochner type formulas analogous to (\ref{exboch}), except they involve divergences of vectorfields which are not gradient vectorfields. This eigenvalue condition is not diffeomorphism invariant; so, we see no immediate application of this observation.

In dimension $4$ for $z$ a $2-$form, (\ref{fp1vareq}) is equivalent to (\ref{1hflow}) or (\ref{hflow}). 
Thus, in this case, the form satisfies half of the equations given by $dz=0$ and the $d^*z$ equation. The assumption $dz=0$ gives 2 more equations. We therefore verify half the Euler Lagrange equations formally satisfied by this type of nonlinear harmonic, including all the equations needed for the Bochner formula (\ref{nboch}) to hold weakly. If the constraint equations are not diffeomorphism invariant, then the allowed vectorfields $X$ generating diffeomorphisms in the first variation must be correspondingly constrained. For example, if the constraint equation coefficients are functions (only) of a symplectic form, then the first variation formula holds for  locally hamiltonian vectorfields.  If we  restrict  to gradient variations only,  the first variation formula reduces to a single second order equation.  

\begin{proposition}First variation formula for gradient flow minimizing 2-forms in a compact 4-manifold. Let $z$ be a 2-form in a compact oriented 4-manifold such that $\|\phi_t^*z\|^2$ has a critical point at $t=0$ for all smooth 1 parameter families of diffeomorphisms (with $\phi_0 =\text{identity}$) generated by gradient vector fields $X.$ Write $z = aw^1\wedge w^2+bw^3\wedge w^4$ as in Section \ref{gframe}. Then 
\begin{equation}\label{biwave}0=  (\Delta   -2\Delta_{D}^*)(a^2-b^2). 
\end{equation}
\end{proposition}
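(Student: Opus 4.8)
The plan is to feed a gradient field $X=\grad\psi$ into the reduced first variation formula (\ref{f1var}) of Proposition \ref{1varf} and then integrate by parts twice, exploiting that $\psi$ is an arbitrary smooth function. First I would identify the relevant frame: writing $z=aw^1\wedge w^2+bw^3\wedge w^4$ with $a^2\neq b^2$ as in Section \ref{gframe}, the frame $\{e_i\}$ there is exactly the one diagonalizing the form $Q(v,w)=\langle i_vz,i_wz\rangle$ underlying (\ref{niceframe}). Indeed $i_{e_1}z=aw^2$, $i_{e_2}z=-aw^1$, $i_{e_3}z=bw^4$, $i_{e_4}z=-bw^3$, so the cross terms $\langle i_{e_j}z,i_{e_m}z\rangle$ vanish for $j\neq m$, while $|i_{e_1}z|^2=|i_{e_2}z|^2=a^2$, $|i_{e_3}z|^2=|i_{e_4}z|^2=b^2$, and $|z|^2=a^2+b^2$. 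Hence (\ref{f1var}) is available in this frame.

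Next I would substitute $X=\grad\psi$, for which $X^j_{;m}=\Hess(\psi)(e_j,e_m)$, so that $X^j_{;j}=\Hess(\psi)(e_j,e_j)$ and $\mathrm{div}(X)=\sum_j\Hess(\psi)(e_j,e_j)$. Feeding the eigenvalues above into (\ref{f1var}) and abbreviating $S_a:=\Hess(\psi)(e_1,e_1)+\Hess(\psi)(e_2,e_2)$ and $S_b:=\Hess(\psi)(e_3,e_3)+\Hess(\psi)(e_4,e_4)$, the $(a^2+b^2)\,\mathrm{div}(X)$ term cancels all but the difference, leaving
\[
0=\int_M\Big(2(a^2S_a+b^2S_b)-(a^2+b^2)(S_a+S_b)\Big)\,dv=\int_M (S_a-S_b)(a^2-b^2)\,dv .
\]
Here $S_a$ is the trace of $\Hess(\psi)$ along the distribution $\Sigma_a$ and $S_b$ the trace along $\Sigma_b$; each depends only on the metric and the distribution, not on the orthonormal basis chosen inside it.

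Finally, writing $D:=\Sigma_a$ and letting $\Delta_D$ denote the Laplacian of $D$ (taken with the same sign convention as the full Laplacian $\Delta$, so that $\Delta=\Delta_{\Sigma_a}+\Delta_{\Sigma_b}$), one has $S_a-S_b=\Delta\psi-2\Delta_D\psi$ up to the global sign convention. Setting $u:=a^2-b^2$ and integrating by parts twice on the closed manifold $M$ — using that $\Delta$ is self-adjoint and that $\Delta_D$ has a formal $L^2$ adjoint $\Delta_D^*$ — turns the identity into $0=\int_M\psi\,(\Delta-2\Delta_D^*)u\,dv$ for every smooth $\psi$. Since $\psi$, equivalently the gradient field $X$, is arbitrary, the fundamental lemma of the calculus of variations yields $(\Delta-2\Delta_D^*)(a^2-b^2)=0$, which is (\ref{biwave}).

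The main obstacle is precisely this last integration by parts. Because the adapted frame $\{e_i\}$ twists over $M$ and $D=\Sigma_a$ need not be integrable, $\Delta_D$ is a genuinely non-self-adjoint second order operator, and computing $\Delta_D^*$ requires carrying the mean-curvature and torsion terms of $D$ correctly through both integrations by parts; one must check that these first-order corrections are exactly those absorbed into the definition of $\Delta_D^*$, so that the resulting operator is the clean $\Delta-2\Delta_D^*$ rather than $\Delta-2\Delta_D$ plus spurious lower-order terms. A convenient consistency check is that interchanging the roles of $\Sigma_a$ and $\Sigma_b$ leaves the equation unchanged, reflecting the identity $\Delta_{\Sigma_a}^*u=\Delta_{\Sigma_b}^*u=\tfrac12\Delta u$.
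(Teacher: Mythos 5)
Your proof is correct and takes essentially the same route the paper intends: the proposition is stated as the immediate consequence of restricting the first variation formula (\ref{f1var}) to gradient fields $X=\nabla\psi$, which is exactly your computation in the $Q$-diagonalizing frame of Section \ref{gframe}. The cancellation down to $0=\int_M (S_a-S_b)(a^2-b^2)\,dv$, the identification $S_a-S_b=\pm(\Delta-2\Delta_D)\psi$, and the passage to the weak equation via the formal adjoint $\Delta_D^*$ (with $\psi$ arbitrary) is precisely the paper's implicit argument, so no gap remains.
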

Similarly, one obtains a single second order equation in the case of locally Hamiltonian vector fields.

The second variation inequality now arises from evaluating the $t^2$ terms in the two expressions for $\langle z,\ast^{-1}\ast_tz\rangle$, when $z$ satisfies the first variation equality, and using the first variation equality to simplify.  We give 2 expressions for the resulting second variation inequality.

\begin{proposition}Second variation formula. Let $\|\phi_t^*z\|^2$ have a local minimum at $t=0$ for all smooth 1 parameter families of diffeomorphisms, $\phi_t$ generated by gradient fields $X.$ Then 
\begin{equation}\label{vart1} 0\leq \langle z, (   - X^k B^j_{m ;k} c(\omega^j)\hat c(\omega^m)+(\sum_{i,m}B^i_m c(\omega^i)\hat c(\omega^m) )^2)z\rangle_{L_2},
\end{equation}
and 
\begin{equation}\label{vart2} 0\leq \langle z,(  [(A^2+B^2)^j_{m })+R(X,v_m,v_j,X)]c(\omega^j)\hat c(\omega^m)+(\sum_{i,m}B^i_m c(\omega^i)\hat c(\omega^m) )^2)z\rangle_{L_2}.
 \end{equation}
\end{proposition}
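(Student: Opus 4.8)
The plan is to read both inequalities directly off the second-order term in the two expansions of $\ast^{-1}\ast_t$, using the identity $\|\phi_t^*z\|^2=\langle z,\ast^{-1}\ast_tz\rangle_{L_2}$ established earlier. The structural point that makes this legitimate is that all of the $t$-dependence sits in the smooth bundle endomorphism $\ast^{-1}\ast_t$ while $z\in L_2$ is held fixed; hence $t\mapsto\|\phi_t^*z\|^2$ is a smooth scalar function of $t$ which one may differentiate twice \emph{without ever differentiating} $z$. Moreover the operators occurring in (\ref{var2a}) and (\ref{var2b}) are algebraic (zeroth order) in $z$, with coefficients assembled from the curvature and from covariant derivatives of the smooth field $X$, so every pairing against $z\in L_2$ is finite and the whole argument survives the low regularity of a minimizer.

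First I would record the consequence of a local minimum. Writing $\ast^{-1}\ast_t=I+tD_1+\tfrac{t^2}{2}D_2+O(t^3)$, where $D_2$ denotes the operator displayed inside the outer parentheses of (\ref{var2a}) (equivalently of (\ref{var2b})), a local minimum at $t=0$ of $\langle z,\ast^{-1}\ast_tz\rangle_{L_2}$ forces $\tfrac{d}{dt}\big|_{0}=0$ and $\tfrac{d^2}{dt^2}\big|_{0}\geq0$. The first condition is precisely $\langle z,D_1z\rangle=0$, i.e.\ the first variation identity (\ref{1var}) of Proposition \ref{1varf}, which therefore holds for every gradient field $X$; the second condition reads $0\leq\langle z,D_2z\rangle_{L_2}$.

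For (\ref{vart1}) this is the whole argument: the expansion (\ref{var2a}) already exhibits $D_2=-X^kB^j_{m;k}c(\omega^j)\hat c(\omega^m)+\big(\sum_{i,m}B^i_mc(\omega^i)\hat c(\omega^m)\big)^2$ with no acceleration terms, so $0\leq\langle z,D_2z\rangle$ is exactly (\ref{vart1}). For (\ref{vart2}) I would instead start from (\ref{var2b}), whose $t^2$ coefficient carries the extra acceleration terms $-\tfrac12\big[(\nabla_XX)^j_{;m}+(\nabla_XX)^m_{;j}\big]c(\omega^j)\hat c(\omega^m)$. The observation that disposes of these is that for a gradient field $X=\nabla u$ one has $\nabla_XX=\tfrac12\nabla|X|^2$, which is again a gradient field; this is exactly where the restriction to gradient-generated flows is used. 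Since $t=0$ is a critical point of the energy along every gradient-generated flow, the identity (\ref{1var}) applies with $X$ replaced by $Y=\nabla_XX$, giving $\int_M\langle z,[(\nabla_XX)^j_{;m}+(\nabla_XX)^m_{;j}]c(\omega^j)\hat c(\omega^m)z\rangle\,dv=0$. Thus the acceleration terms integrate to zero and drop out of $\langle z,D_2z\rangle$, leaving exactly the curvature-plus-$(A^2+B^2)$ expression of (\ref{vart2}).

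The one delicate point --- and the main, if modest, obstacle --- is the passage from the pointwise computations of (\ref{2nd2})--(\ref{var2b}), carried out at a fixed $p$ in a radially covariant-constant frame where $(\nabla_Xv_i)(p)=0$, to the global $L_2$ inequalities asserted. Here I would stress that $D_2$ is the intrinsically defined second Taylor coefficient of the globally defined endomorphism $\ast^{-1}\ast_t$: the adapted frame merely displays its value at $p$, so integrating the pointwise integrand $\langle z,D_2z\rangle(p)$ over $M$ produces the stated inequalities. Finally, the elimination of the acceleration terms in the derivation of (\ref{vart2}) is itself an integrated identity, so it is automatically compatible with this integration and requires no pointwise cancellation.
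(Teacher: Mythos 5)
Your proposal is correct and follows essentially the same route as the paper: the paper's (terse) proof likewise reads the two inequalities off the $t^2$ coefficients of (\ref{var2a}) and (\ref{var2b}) in $\|\phi_t^*z\|^2=\langle z,\ast^{-1}\ast_tz\rangle_{L_2}$, using criticality for the first-order term and the first variation identity to remove the remaining terms in (\ref{var2b}). Your explicit observation that $\nabla_XX=\tfrac12\nabla|X|^2$ is again a gradient field, so that (\ref{1var}) applies to it and kills the acceleration terms, is exactly the ``using the first variation equality to simplify'' step the paper leaves implicit.
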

Observe that these inequalities also apply to unconstrained harmonic forms. In this case, (\ref{vart2}) gives  vanishing theorems when we have a sufficient supply of vector fields with controlled covariant derivative. For example, Grassmann manifolds viewed as spaces of unitary projections have the functions $F_C(p) = \tr p\,C ,$
for each hermitian matrix $C$. (See for example \cite[Theorem 1.4.12]{Xin}.) Using $X=\nabla F_A$ in the second variation formula, with $A$ running over an orthonormal basis of hermitian matrices, one can obtain numerous cohomology vanishing theorems, which, however, can be proved more easily by other techniques.

Orientation preserving isometries preserve the Hodge star operator. Hence for $X$ a Killing field, the second variation should be trivial. For Killing fields, $B=0$.  More generally, one expects the $A$ term to drop out of the second variation formula, as it does in (\ref{vart1}) but not (\ref{vart2}). This suggests (\ref{vart2}) might contain information about Killing vectorfields.

Suppose that $X$ is a Killing vectorfield. Then $B=0$, and the second variation inequality becomes an equality, yielding  
\begin{equation} 0= \int  \sum_j(|\nabla_jX|^2  - R(X,v_j,v_j,X)) (2|i_{v_j}z|^2 -|z|^2)dv,
\end{equation}
generalizing the integral form of the  Bochner formula for Killing fields. Choosing $z=dvol$ returns the usual  formula.

\end{document}